\theoremstyle{plain}
 \newtheorem{theorem}{Theorem}
 \newtheorem{cor}[theorem]{Corollary}
 \newtheorem{problem}[theorem]{Problem}
\begin{document}

 \title{Lifting of nilpotent contractions}
\author{Tatiana Shulman}

\address{Mathematics Department, Moscow state aviation technological university, Moscow,
121552, Russia}

\email{tatiana$_-$shulman@yahoo.com}

\subjclass[2000]{46 L05; 46L35}

\keywords {Projective $C^*$-algebra, lifting problem, nilpotent
element, M-ideal}

\date{\today}

\maketitle

%\begin{abstract}

%\end{abstract}

\section*{Introduction}
For the standard epimorphism from a $C^*$-algebra $A$ to  its
quotient $A/I$ by a closed ideal $I$, one may ask whether an element
$b$ in $A/I$ with some specific properties is the image of some
element $a$ in $A$ with the same properties. This is known as a
lifting problem that can be considered as a  non-commutative
analogue of extension problems for functions.

In the present work we deal with the following lifting problem. Let
$A$ be any $C^*$-algebra, $A/I$ --- any its quotient,  $b\in A/I$
--- a nilpotent contraction, that is $b^n=0$, for some fixed number
$n$, and $\|b\|\le 1$. Find an element $a\in A$ that is a
counter-image of $b$  and is also a nilpotent contraction: $a^n=0$
and $\|a\|\le 1$.

The problem can be reformulated in terms of representations of
relations. For the correct definition of a relation (or a system of
relations) see \cite{Loring}. Vaguely speaking a relation is an
equality of the form $f(x_1,...,x_n,x_1^*,...,x_n^*) = 0$ where $f$
is a non-commutative polynomial (an element of the corresponding
free algebra) and also inequalities of the form $\|x_i\|\le c$ or
$\|x_i\|<c$ are admitted. By a representation of a relation in a
$C^*$-algebra $A$ one means an $n$-tuple $(a_1,...,a_n)$ of elements
of $A$ that satisfy with their adjoints the given equality and
inequalities (if any).

A relation is liftable if for each its representation
$(b_1,...,b_n)$ in a quotient algebra $A/I$, there is a
representation $(a_1,...,a_n)$ in $A$ such that $b_i = a_i + I$.

So the question we consider is

\begin{problem}\label{problem} Are the relations
\begin{equation}\label{rel}
x^n=0 {\text\; and \;} \|x\| \le 1
\end{equation}
liftable?
\end{problem}

It was proved in \cite{Pedersen} that each nilpotent element can be
lifted to a nilpotent element. Later in \cite{Loring} it was shown
that among nilpotent preimages of a nilpotent element $b$ there are
elements of norm $\|b\|+\epsilon$, for any $\epsilon
>0$. This implies that the relations
\begin{equation}\label{rel1}
x^n=0 {\text\; and \;} \|x\| < 1
\end{equation}
 are
liftable.

Problem 1 was stated in \cite{Loring} in connection with the study
of projective $C^*$-algebras. A $C^*$-algebra $B$ is projective if
for any homomorphism $f$ from $B$ to a quotient $A/I$, there is a
homomorphism $g: B\to A$ with $f(b) = g(b) +I$, $b\in B$. Projective
$C^*$-algebras were introduced  in \cite{E-K}. There are very few
$C^*$-algebras that are known to be projective (\cite{Loring},
\cite{L-P}).

 A $C^*$-algebra $\mathcal{A}$ is
called a universal $C^*$-algebra of a  relation if there is a
representation $\pi_0$ of this relation in $\mathcal{A}$ such that
any representation of the relation factorizes uniquely through
$\pi_0$.   Note that not every system of relations has the universal
$C^*$-algebra, for example the relations (\ref{rel1}) don't have it
(the reason is that an infinite direct sum of strict contractions
needn't be a strict contraction).

It is not difficult to see that the universal $C^*$-algebra of a
relation is projective if and only if the relation is liftable. So
the question can be reformulated as follows: is the universal
$C^*$-algebra of the relations ({\ref{rel}) projective?

The aim of the present work is to give the positive answer to
Problem 1 and as a consequence to extend the list of known
projective $C^*$-algebras.

\section{M-ideals}

 We  recall some geometrical definitions (see, e.g., \cite{Werner}).

A closed subspace $Y$ of a Banach space $X$ is called an L-summand
(M-summand), if $X = Y \oplus Z$ for some subspace $Z\subset X$, and
$\|y+z\| = \|y\| + \|z\|$ (respectively $\|y+z\| =
\max\{\|y\|,\|z\|\})$ for all $y\in Y$, $z\in Z$. Furthermore $Y$ is
an M-ideal in $X$ if its annihilator $Y^{\bot}$ is an $L$-summand of
$X^*$.

Let $T$ be an operator on a Banach space $X$. Let us say that an
M-ideal $Y\subset X$ {\it dually reduces} $T$ if both summands of
the L-decomposition $X^* = Y^{\bot} + Z$ are invariant for $T^*$.
The definition is correct because a subspace can have only one
L-complement. Indeed if $X = Y + Z$ and $X = Y + W$ are
L-decompositions then for $z\in Z$ one has $z = y +w$ and $w = z+
(-y)$ whence $\|z\| = \|z\| + 2\|y\|$, $y=0$, $z\in W$.

\begin{theorem}\label{main} If an M-ideal $Y$ of $X$ dually reduces an operator $T$
then $\overline{TY}$ is an M-ideal of $\overline{TX}$.
 \end{theorem}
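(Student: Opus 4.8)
The plan is to unwind the definition of M-ideal on both sides. Since $\overline{TX}$ is a closed subspace of $X$, its dual is $(\overline{TX})^* = X^*/(\overline{TX})^{\bot}$ isometrically, and $(\overline{TX})^{\bot} = (TX)^{\bot} = \ker T^*$. Likewise, identifying $(\overline{TY})^{\bot}$ — the annihilator computed inside $(\overline{TX})^*$ — with a subspace of $X^*/\ker T^*$, one checks, using $f(Ty) = (T^*f)(y)$, that it equals $(T^*)^{-1}(Y^{\bot})/\ker T^*$. So the theorem reduces to showing that $(T^*)^{-1}(Y^{\bot})/\ker T^*$ is an L-summand of $X^*/\ker T^*$.

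Write $S = T^*$ and let $X^* = Y^{\bot} \oplus_1 Z$ be the given L-decomposition, with $S(Y^{\bot}) \subseteq Y^{\bot}$ and $S(Z) \subseteq Z$. First I would decompose the kernel: for $f = g + h \in \ker S$ with $g \in Y^{\bot}$, $h \in Z$, invariance gives $Sg \in Y^{\bot}$ and $Sh \in Z$, and $Sg + Sh = 0$ forces $Sg = Sh = 0$ by directness; hence $\ker S = K_1 \oplus_1 K_2$ where $K_1 = \ker S \cap Y^{\bot}$ and $K_2 = \ker S \cap Z$. Next, a short computation with the quotient norm shows that $X^*/\ker S$ is the L-sum $(Y^{\bot}/K_1) \oplus_1 (Z/K_2)$ via $f + \ker S \mapsto (g + K_1,\, h + K_2)$: indeed $\|(g + k_1) + (h + k_2)\| = \|g + k_1\| + \|h + k_2\|$ for all $k_i \in K_i$, so the infimum over $k_1, k_2$ separates into the two quotient norms.

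Then I would identify $(T^*)^{-1}(Y^{\bot})$. For $f = g + h$ as above, $Sf = Sg + Sh$ with $Sg \in Y^{\bot}$ and $Sh \in Z$, so $Sf \in Y^{\bot}$ iff $Sh = 0$ iff $h \in K_2$; hence $(T^*)^{-1}(Y^{\bot}) = Y^{\bot} \oplus_1 K_2$. Passing to the quotient, under the isometry above this subspace maps onto $\{(g + K_1, 0) : g \in Y^{\bot}\} = (Y^{\bot}/K_1) \oplus \{0\}$, which is the first L-summand of $X^*/\ker S$. Therefore $(\overline{TY})^{\bot}$ is an L-summand of $(\overline{TX})^*$, which is precisely the assertion that $\overline{TY}$ is an M-ideal of $\overline{TX}$.

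I expect the main obstacle to be purely bookkeeping: keeping the three identifications straight — $(\overline{TX})^* \cong X^*/\ker T^*$, the image of $(\overline{TY})^{\bot}$ therein, and the L-sum decomposition of the quotient — and verifying that no weak-$*$ subtleties intrude (they do not, since annihilators are weak-$*$ closed and the quotient-norm computation is elementary). Everything else is a direct consequence of the invariance hypothesis together with the uniqueness of L-complements already recorded before the statement.
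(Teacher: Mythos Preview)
Your argument is correct and follows essentially the same route as the paper: identify $(\overline{TX})^*$ with $X^*/\ker T^*$, push the $L$-decomposition $X^*=Y^{\bot}\oplus_1 Z$ down to the quotient using $T^*$-invariance, and check that the image of $Y^{\bot}$ there coincides with the annihilator of $\overline{TY}$. The only cosmetic difference is that you organize the passage to the quotient via the splitting $\ker T^*=(\ker T^*\cap Y^{\bot})\oplus_1(\ker T^*\cap Z)$, whereas the paper verifies the additivity of the quotient norm by a direct $\varepsilon$-argument; the content is the same.
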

\begin{proof}
For any subspace $E\subset X$, one has the canonical  isometrical
isomorphism $i: E^* \to X^*/E^{\bot}$.  Namely, for any $h\in E^*$,
$i(h) = \tilde h + E^{\bot}$, where $\tilde h $ is any extension of
$h$ onto $X$. Hence
$$i(\overline{TX}^*)= X^*/(TX)^{\bot} = X^*/\ker(T^*).$$
Let $q$ denote the quotient map from $X^*$ to $X^*/\ker(T^*)$.

We have the L-sum decomposition $X^* = Y^{\bot} \oplus Z$; by the
assumption, both summands are $T^*$-invariant.

Let us prove that the decomposition $q(X^*) = q(Y^{\bot}) + q(Z)$
is an L-sum, that is for any $\xi = \eta + \zeta$, where $\eta\in
q(Y^{\bot})$, $\zeta\in q(Z)$, the equality
$\|\xi\|=\|\eta\|+\|\zeta\|$ holds.

 For any $\varepsilon>0$, there
is $f\in X^*$ such that $\xi = q(f)$ and $\|\xi\|\ge
\|f\|-\varepsilon$. There are  $f_1\in Y^{\bot}$, $f_2\in Z$ such
that $f=f_1+f_2$. Prove that $q(f_1) = \eta$, $q(f_2)=\zeta$.
Consider $g_1\in Y^{\bot}$, $g_2\in Z$ such that $\eta = q(g_1)$,
$\zeta = q(g_2)$. Then $$q(g_1+g_2) = \eta+\zeta = \xi=q(f) =
q(f_1+f_2),$$ whence $g_1-f_1+g_2-f_2\in ker( T^*)$. Hence
$T^*(g_1-f_1) = T^*(-g_2+f_2)$. Since $Y^{\bot}$ and $Z$ are
invariant subspaces for $T^*$, we get $T^*(g_1-f_1)\in Y^{\bot}$,
$T^*(-g_2+f_2)\in Z$, whence $T^*(g_1-f_1) = T^*(-g_2+f_2)=0$.
Hence $\eta= q(g_1)=q(f_1)$, $\zeta= q(g_2)=q(f_2)$ and we have
$$\|\xi\|\ge\|f\|-\varepsilon = \|f_1\|+\|f_2\|-\epsilon \ge
\|q(f_1)\|+\|q(f_2)\|-\varepsilon = \|\eta\|+\|\zeta\|-\epsilon.$$
Since it is true for every $\varepsilon >0$ and  since $\|\xi\|\le
\|\eta\|+\|\zeta\|$, the equality $\|\xi\|=\|\eta\|+\|\zeta\|$
holds. In particular it easily follows that $q(Y^{\bot})\bigcap q(Z)
= 0$ and  both subspaces $q(Y^{\bot})$ and $q(Z)$ are closed.

Thus $q(Y^{\bot})$ is an L-summand in $q(X^*)$. It remains to prove
that $q(Y^{\bot}) = i((\overline{TY})^{\bot})$, where by
$\overline{TY}^{\bot}$ we denote the annihilator of $TY$ in
$\overline{TX}^*$. Let $f\in Y^{\bot}$ and $f_1$ be its restriction
to $\overline{TX}$. Then, by definition, $q(f) = i(f_1)$.  For each
$y\in Y$, one has $f_1(Ty) = f(Ty)= (T^*f)(y) = 0$ because $T^*$
preserves $Y^{\bot}$. Hence $f_1$ belongs to $\overline{TY}^{\bot}$.
We proved that $q(Y^{\bot})\subset i(\overline{TY}^{\bot})$.

 Conversely, let
$h\in (\overline{TY})^{\bot}$.  There are $f_1\in Y^{\bot}$, $f_2\in
Z$ such that $i(h)=q(f_1)+q(f_2)$. Let $\tilde h $ be any extension
of $h$ onto $X$. Then $\tilde h -f_1-f_2\in ker(T^*)$ and $T^*\tilde
h - T^*f_1-T^*f_2=0$. Since, for any $y\in Y$, $(T^*\tilde
h)(y)=\tilde h(Ty)=h(Ty)=0$, $T^*\tilde h \in Y^{\bot}$. Since
$T^*\tilde h -T^*f_1\in Y^{\bot}$  and $T^*f_2\in Z$, we get
$T^*f_2=0$, that is $q(f_2)=0$. Hence $i(h)\in q(Y^{\bot})$ and
$q(Y^{\bot}) = i((\overline{TY})^{\bot})$.

We get that $i((\overline{TY})^{\bot})$ is an L-summand in
$i((\overline{TX}^*)$, or, equivalently, that  $\overline{TY}$ is an
M-ideal of $\overline{TX}$.
\end{proof}

For an arbitrary algebra $A$, an operator from $A$ to $A$ is called
an elementary operator if it is of the form $x\mapsto \sum_{i=1}^N
a_ixb_i$, where $a_i, b_i\in A$.

\begin{theorem}\label{oldmain} Let $A$ be a $C^*$-algebra, $I$  --- its ideal, $T:A\to A$ --- an elementary operator.
Then $\overline{TI}$ is an M-ideal in $ \overline{TA}$.
 \end{theorem}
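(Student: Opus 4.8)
The plan is to deduce this from Theorem~\ref{main} by checking that the ideal $I$ is an M-ideal of $A$ which dually reduces the elementary operator $T$. First I would recall the standard fact (see \cite{Werner}) that a closed two-sided ideal $I$ of a $C^*$-algebra $A$ is always an M-ideal in $A$; indeed $I^{\perp\perp}$ in $A^{**}$ is $zA^{**}$ for a central projection $z$, giving the L-decomposition $A^* = I^{\perp} \oplus I^*$ at the level of the predual. So the only real content is to verify that the L-decomposition $A^* = I^{\perp}\oplus Z$ (where $Z$ may be identified with $I^*$) is invariant under $T^*$, i.e. that $I$ dually reduces $T$.

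The key computation is the following. Write $T(x) = \sum_{i=1}^N a_i x b_i$. Invariance of $I^{\perp}$ under $T^*$ is immediate: if $f\in I^{\perp}$ then for $y\in I$ we have $(T^*f)(y) = f(Ty) = f(\sum a_i y b_i) = 0$ since $a_i y b_i \in I$, so $T^* f \in I^{\perp}$. The work is in showing $T^*$ preserves the complementary summand $Z$. The cleanest route is to use the concrete identification: the L-complement of $I^{\perp}$ in $A^*$ consists of those functionals that "live on $I$", and one can realize them via the central projection $z\in A^{**}$ supporting $I^{\perp\perp}$; concretely, $Z = \{f\in A^*: f = z\cdot f\}$ where $(z\cdot f)(x) = f(xz)$ computed in $A^{**}$, or equivalently $Z$ is the weak-* closure issues aside the set of functionals annihilating a quasicentral bit. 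I would then compute $T^*(z\cdot f)$ and show it again has the form $z\cdot(\text{something})$, using that $z$ is central in $A^{**}$ and that the bitransposed operator $T^{**}$ on $A^{**}$ is still the elementary operator $x\mapsto \sum a_i x b_i$; centrality of $z$ gives $\sum a_i (zx) b_i = z\sum a_i x b_i$, which is exactly what is needed to see that $Z$ is $T^*$-invariant.

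The main obstacle I anticipate is making the identification of the L-complement $Z$ precise enough that the computation with the central projection $z$ is rigorous — in particular being careful about where products are taken ($A$ versus $A^{**}$) and about the fact that $T^{**}$ restricted to the relevant subspace of $A^{**}$ is genuinely implemented by the same coefficients $a_i, b_i$. Once that bookkeeping is in place, invariance of both summands under $T^*$ follows from the single algebraic fact that $z$ is central, and then Theorem~\ref{main} applies verbatim to conclude that $\overline{TI}$ is an M-ideal of $\overline{TA}$. An alternative, perhaps slicker, approach would be to avoid $A^{**}$ altogether: take an approximate identity $(e_\lambda)$ of $I$ that is quasicentral in $A$, note that for $f\in A^*$ the decomposition is $f = \lim e_\lambda f e_\lambda$ part plus remainder, and use quasicentrality ($\|a e_\lambda - e_\lambda a\|\to 0$) to commute the $e_\lambda$ past the coefficients $a_i, b_i$ of $T$; I would mention this as the conceptual reason the theorem is true even if the formal proof goes through the bidual.
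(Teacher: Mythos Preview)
Your proposal is correct and follows essentially the same route as the paper: reduce to Theorem~\ref{main} by invoking the central projection $p\in A^{**}$ with $I^{\perp}=pA^*$, and observe that because $T$ is elementary and $p$ is central, $T^*$ commutes with multiplication by $p$, so both L-summands $pA^*$ and $(1-p)A^*$ are $T^*$-invariant. The paper compresses your separate treatment of $I^{\perp}$ and $Z$ into the single remark that $T^*$ commutes with $p$, and does not mention the quasicentral-approximate-identity alternative, but the substance is identical.
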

 \begin{proof}
Recall first of all that the bidual $A^{**}$ of $A$ is a
$W^*$-algebra. Moreover the natural $A$-bimodule structure in $A^*$
($(af)(x) = f(xa)$, $(fa)(x) = f(ax)$) extends by the $\ast$-weak
continuity to $A^{**}$, so $A^*$ becomes an $A^{**}$-bimodule.

We may assume that the ideal $I$ is closed. It is well known
(\cite{Prosser}) that $I$ is an $M$-ideal in $A$. Moreover there
exists a central projection
 $p\in A^{**}$ such that $I^{\bot} = pA^*$ and $A^* = pA^* + (1-p)A^*$ is an L-sum.
 Since $T$ is elementary,  $T^*$ commutes with the multiplication by $p$, so both summands
 are invariant for $T^*$. This means that $I$ dually reduces $T$. It
 remains to apply Theorem \ref{main}.
 \end{proof}

\medskip

A subspace $Y$ of a Banach space $X$ is called proximinal (e.g.
\cite{Werner}) if for any $x\in X$ there is $y\in Y$ such that
$\|x-y\| = d(x, Y)$, or, equivalently, if for any $z\in X/Y$ there
is a lift $x$ of $z$ such that $\|z\|=\|x\|$.

\begin{cor}\label{prox}
Let $A$ be a $C^*$-algebra, $T$ an elementary operator on $A$. Then,
for each ideal $I$ of $A$, the subspace $\overline{TI}$ is
proximinal in $\overline{TA}$.
\end{cor}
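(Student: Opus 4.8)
The plan is to deduce Corollary \ref{prox} from Theorem \ref{oldmain} by invoking the standard fact that every M-ideal is proximinal. By Theorem \ref{oldmain}, $\overline{TI}$ is an M-ideal in $\overline{TA}$; since $\overline{TA}$ is a Banach space and $\overline{TI}$ is a closed subspace, it suffices to know that M-ideals are proximinal. This is a classical result in the geometry of Banach spaces (see \cite{Werner}), so the entire content of the corollary reduces to citing it. Thus the proof should be no more than two or three sentences.

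The one point that deserves a word of care is why being proximinal in $\overline{TA}$ is the statement we want, rather than, say, proximinality in all of $A$ — but the corollary is literally phrased relative to $\overline{TX}=\overline{TA}$, which matches the conclusion of Theorem \ref{oldmain} exactly, so no bridging argument is needed. I would also recall the equivalent reformulation already given in the paragraph preceding the corollary: $Y\subset X$ proximinal means every coset $z\in X/Y$ has a norm-preserving lift; applied to $X=\overline{TA}$, $Y=\overline{TI}$, this is the form in which the corollary will be used later for the lifting problem, and it may be worth stating it explicitly here.

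There is essentially no obstacle: the only thing to supply is the reference for "every M-ideal is proximinal,'' which follows from the fact that if $Y$ is an M-ideal then $Y^{\perp}$ is an L-summand of $X^*$, hence $Y^{\perp\perp}$ is weak*-closed and $Y$ is an M-ideal hence in particular an M-summand in its bidual situation, from which proximinality is extracted by a Hahn–Banach/weak*-compactness argument; but all of this is packaged in \cite{Werner} and need not be reproduced.

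Concretely I would write:
\begin{proof}
By Theorem \ref{oldmain}, $\overline{TI}$ is an M-ideal in the Banach space $\overline{TA}$. Every M-ideal in a Banach space is proximinal (see, e.g., \cite{Werner}), so $\overline{TI}$ is proximinal in $\overline{TA}$. Equivalently, every element of $\overline{TA}/\overline{TI}$ admits a lift in $\overline{TA}$ of the same norm.
\end{proof}
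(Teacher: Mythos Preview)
Your proof is correct and matches the paper's own argument essentially verbatim: the paper also invokes Theorem~\ref{oldmain} and then cites \cite{Werner} (specifically Theorem~II.1.1 there) for the fact that all M-ideals are proximinal. There is nothing to add or change.
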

\begin{proof} By (\cite{Werner}, Theorem II.1.1),
all M-ideals are proximinal subspaces. So it suffices to use
Theorem \ref{oldmain}.
\end{proof}

\section{Projectivity}

Below we use the following notation.

By $M(I)$ we denote the multiplier $C^*$-algebra of a $C^*$-algebra
$I$.

For elements $x,y$ of a $C^*$-algebra $A$,  $x<<y$ means $xy = yx =
x$.

For a closed ideal $I$ of $A$ we denote by $a\to \dot a$ the
standard epimorphism from $A$ to $A/I$. We say that $a$ is a lift of
$b$ if $\dot{a} = b$.

\begin{theorem} The universal
 $C^*$-algebra of the relations (\ref{rel}) is
projective.
\end{theorem}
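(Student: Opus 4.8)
The plan is to show that the relations $x^n=0$, $\|x\|\le 1$ are liftable, which by the equivalence noted in the Introduction is the same as projectivity of the universal $C^*$-algebra. So let $A$ be a $C^*$-algebra, $I$ a closed ideal, $\dot a\mapsto b$ the quotient map, and suppose $b\in A/I$ satisfies $b^n=0$, $\|b\|\le 1$. We know from \cite{Pedersen} that $b$ has \emph{some} nilpotent lift $c\in A$ with $c^n=0$, but a priori $\|c\|$ may exceed $1$; the whole game is to repair the norm without destroying nilpotency. The key new tool is Corollary \ref{prox}: for the elementary operator $T$ on $A$ given by $T(x)=$ (the map whose range controls $x^n$), the subspace $\overline{TI}$ is proximinal in $\overline{TA}$, so we can choose lifts of minimal norm within ranges of elementary operators.

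First I would reduce to a \emph{strict} nilpotent lift. Start with a nilpotent lift $c_0$ of $b$ (Pedersen), and write $c=c_0 h$ or $c = h^{1/2}c_0 h^{1/2}$ for a suitable positive contraction $h\in M(I)$ with $h$ approaching a unit relative to $I$, chosen so that $\|\dot c\|=\|b\|\le 1$ is preserved and $c$ is still nilpotent (multiplying a nilpotent on one side by something commuting appropriately keeps it nilpotent — this is where I must be careful, since a general multiplier does not commute with $c_0$). The cleaner route: use the structure $c_0^n=0$, consider the elementary operator $T(x)=x^n$ is not linear, so instead work coordinatewise. Realize $c_0$ inside $M_n$ over $A$ via the standard nilpotent Jordan pattern: any nilpotent of order $n$ generates a homomorphic image of the universal algebra, and one can present $c_0$ using $n-1$ "strictly lower-triangular" pieces $a_1,\dots,a_{n-1}$ with $c_0=\sum a_k$, where the products that vanish are captured by linear elementary operators $T_k$. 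Then $b$ corresponds to images $\dot a_k$, and I would apply Corollary \ref{prox} to each relevant $T_k$ to pull back the $\dot a_k$ to $a_k'\in A$ with $\|T_k a_k'\|$ minimal, hence $\le\|\dot{}\text{-image}\|$, controlling the norm of the reconstituted $a'=\sum a_k'$.

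The central step, and the expected main obstacle, is arranging that the norm-minimizing lifts provided by proximinality can be chosen \emph{simultaneously consistent} with the algebraic (nilpotency) relation: proximinality gives, for a single subspace $\overline{TI}\subset\overline{TA}$, one norm-preserving lift, but I need the lift of $b$ itself — a single element $a\in A$ — to have both $a^n=0$ and $\|a\|\le 1$, i.e. I must interleave the norm control with the nilpotency structure rather than apply proximinality to an unrelated element. I expect to handle this by an iterative/approximation scheme: starting from a nilpotent lift, successively correct it by elements of $I$ produced from Corollary \ref{prox} applied to the elementary operators $x\mapsto a^{k}x a^{n-1-k}$ (or the associated multi-linear corrections of $x^n$), each correction lying in $\overline{T I}$ so that it lands in $I$ and fixes the image, while the proximinality bound forces the norm down to $\|b\|+\epsilon$ and, in the limit (using a quasicentral approximate unit of $I$ to glue the corrections), down to $\|b\|\le 1$, with $a^n=0$ holding exactly at the limit because each stage kills one more layer of the expansion of $a^n$. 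Verifying convergence of this scheme, and that the elementary operators involved genuinely send $I$ into the subspace whose proximinality we proved, will be the delicate part; once that is in place, the resulting $a$ is the desired nilpotent contraction lift, and projectivity of the universal $C^*$-algebra follows.
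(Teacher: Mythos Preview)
Your proposal has the right ingredients but is missing the one structural fact that makes the argument go through in a single clean step rather than an uncertain iteration. The Olsen--Pedersen lifting result is sharper than ``$b$ has some nilpotent lift'': what they actually produce is a chain $0\le p_{n-1}\ll q_{n-1}\ll\cdots\ll q_1\le q_0=1$ in $M(I)$ with the property that for \emph{every} lift $a$ of $b$, the element $Ta:=\sum_{j=1}^{n-1}(q_{j-1}-q_j)\,a\,p_j$ is again a lift of $b$ and satisfies $(Ta)^n=0$. The point is that $T$ is a \emph{fixed linear elementary operator} whose entire range (restricted to lifts of $b$) consists of nilpotents of order $n$. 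So there is nothing to ``interleave'': nilpotency is automatic on the affine set $E=Ta_0+TI$ and on its closure, and one only has to minimize the norm there. Proximinality of $\overline{TI}$ in $\overline{T\,M(I)}$ (Corollary~\ref{prox}) then gives an element of $\overline{E}$ realizing $\|Ta_0+\overline{TI}\|$; Loring's $\|b\|+\epsilon$ result forces this quotient norm to equal $\|b\|=1$, and you are done. (The paper also first reduces to the corona $M(I)/I$ via a standard result of Loring, which is why the $p_j,q_j$ can be taken in $M(I)$.)

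By contrast, your sketched routes all stumble on exactly the obstacle you flag yourself. Multiplying a nilpotent by a generic positive element destroys nilpotency; $x\mapsto x^n$ is not linear so Corollary~\ref{prox} does not apply to it; and the proposed iterative scheme using operators $x\mapsto a^{k}x\,a^{n-1-k}$ has the operators depending on the very element you are correcting, so each correction changes the next operator and there is no evident reason the process converges or that $a^n=0$ holds at the limit. The ``strictly lower-triangular'' picture you gesture at is in fact the right intuition --- that is precisely what the $p_j,q_j$ chain implements --- but you need to know that it packages into one elementary operator $T$ independent of the lift, not several operators $T_k$ requiring simultaneous minimization. Once you use that form of the Olsen--Pedersen construction, the whole ``central step'' you were worried about disappears.
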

\begin{proof} By
Theorem 10.1.9 of \cite{Loring}, it suffices to prove that  any
$\ast$-homomorphism from the  universal $C^*$-algebra of the
relations (\ref{rel}) to any quotient $M(I)/I$  lifts to a
$\ast$-homomorphism to $M(I)$, or, equivalently, any nilpotent
contraction in $M(I)/I$ lifts to a nilpotent contraction in $M(I)$.

Fix a nilpotent contraction $b$ in $M(I)/I$: $b^n = 0$. If $\|b\|<1$
then it can be lifted to a nilpotent contraction by  Theorem 12.1.6
of \cite{Loring}. Thus we assume $\|b\|=1$. It is proved in
\cite{Pedersen} that there are elements
$$0\le p_{n-1} <<q_{n-1} << p_{n-2} << \ldots <<q_2<<p_1<<q_1 \le
q_0=1$$ in $M(I)$ such that $\sum_{j=1}^{n-1} (q_{j-1}-q_j)ap_j$ is
a nilpotent (of order $n$) lift of $b$, for any lift $a$ of $b$.
  Let
$$E = \{\sum_{j=1}^{n-1} (q_{j-1}-q_j)ap_j\;|\;\;\dot a = b\}.$$
Note that $\overline E$ also consists of nilpotent lifts of $b$.
Indeed if $a_k\to a$, $a_k^n=0$, $\dot a_k = b$ then $a^n=0$ and
$\dot a = b$. We are going to find in $\overline E$ an element of
norm 1. Define an operator $T:M(I)\to M(I)$ by $$Tx =
\sum_{j=1}^{n-1} (q_{j-1}-q_j)xp_j.$$ Let $a_0$ be some fixed lift
of $b$. Then
$$E = \{Ta_0 +Ti\;|\;\;i\in I\},\;\; \overline E = \{Ta_0
+x\;|\;\;x\in \overline{TI}\}.$$ Since $T$ is an elementary
operator, $\overline{TI}$ is proximinal in $\overline{TM(I)}$  by
Theorem \ref{prox} .

 This implies that in $\overline E$ there is an element of norm 1.
 Indeed, since  $\overline E$ is the set of all lifts of the element
$Ta_0 + \overline{TI}\in \overline{TM(I)}/\overline{TI}$, it is
enough to prove that the norm $\|Ta_0 + \overline{TI}\|$ of the
element $Ta_0 + \overline{TI}$ in $\overline{TM(I)}/\overline{TI}$
is equal to $1$.  We have
$$\|Ta_0 + \overline{TI}\| = \inf_{x\in \overline{TI}} \|Ta_0+x\|\le
\inf_{i\in I} \|Ta_0+Ti\| = \inf_{x\in E} \|x\| = 1$$  by
 Theorem 12.1.6 of \cite{Loring}. It is obvious that $\|Ta_0 +
\overline{TI}\|\ge 1$ because
  $(Ta_0+x)$ is a lift of $b$ for any $x\in \overline{TI}$ whence
$$\|Ta_0 + \overline{TI}\| = \inf_{x\in \overline{TI}} \|Ta_0+x\|
\ge \|b\|=1.$$ Thus in $\overline E$ there is an element of norm 1,
that is a nilpotent lift of $b$ of norm $1$.
\end{proof}

\end{document}